\newcommand{\RN}[1]{\textup{\uppercase\expandafter{\romannumeral#1}}}
\numberwithin{equation}{section}
\newcommand\mc{\mathcal}
\newcommand\mb{\mathbb}
\crefname{equation}{}{}
\newtheorem{theorem}{Theorem}[section]
\newtheorem{lemma}[theorem]{Lemma}
\title{A Note on the Strict Transformation of an Effective Cartier Divisor}
\author{Yu Zhao}
\theoremstyle{definition}
\newtheorem{question}[theorem]{Question}
\newtheorem{definition}[theorem]{Definition}
\theoremstyle{remark}
\newtheorem{remark}[theorem]{Remark}
\begin{document}
\maketitle

\begin{abstract}
  Let $Y$ be an effective Cartier divisor of a smooth variety $Z$. Let $X_{i}$, $i\in \{1,\cdots,n\}$ be a set of pairwise disjoint smooth subvarieties in $Y$ such that their union contains the singular locus of $Y$. In this paper, we give a sufficient condition such that $Bl_{X}Y$ is smooth, where $X$ is the disjoint union of all $X_{i}$. Moreover, we prove that assuming a dimensional condition, there is an admissible subcategory of $D^{b}(Bl_{X}Y)$ which is a weak categorical crepant resolution of $Y$, in the sense of Kuznetsov \cite{MR2403307}.
\end{abstract}

\setlength{\epigraphwidth}{0.45\textwidth}

\section{Introduction}

\subsection{The main result of this paper}
In this paper, we work over an algebraically closed field $\mb{F}$ of any characteristic.

Let $Z$ be an irreducible smooth variety and $\{X_{i}\}_{i=1,\cdots n}$ be a set of pairwise disjoint smooth closed subvarieties of $Z$. We denote $X:=\cup_{i=1}^{n}X_{i}$. Let $Y$ be an effective Cartier divisor that contains $X$. The purpose of this paper is to understand the following questions:

\begin{question}
  Let $\tilde{Y}$ be the strict transform of $Y$ in $Bl_{X}Z$. When is $\tilde{Y}$ a smooth divisor of $Bl_{X}Z$?
\end{question}

Let $\pi:Bl_{X}Z\to Z$ be the projection morphism. Let $C_{i}$ be the conormal bundle of $X_{i}$ in $Z$. Then the exceptional divisor $E_{X_{i}}Z\cong \mb{P}_{X_{i}}(C_{i})$ and we denote $\pi_{i}:E_{X_{i}}Z\cong \mb{P}_{X_{i}}(C_{i})\to X_{i}$ as the projection morphisms. If $\tilde{Y}$ is smooth, we have
$$\mc{O}(\tilde{Y})\cong \pi^{*}\mc{O}(Y)-\sum_{i=1}^{n}k_{i}E_{X_{i}}Z$$
for some $k_{i}\in \mb{Z}$. Moreover, if $Y$ is normal, by the adjunction formula, we have
  $$K_{\tilde{Y}}\cong \pi_{Y}^{*}K_{Y}+\sum_{i=1}(d_{i}-k_{i}-1)E_{X_{i}}Z|_{\tilde{Y}},$$
  where $d_{i}$ is the codimension of $X_{i}$ in $Z$, and $K_{\tilde{Y}}$ and $K_{Y}$ are the canonical divisors of $\tilde{Y}$ and $Y$ respectively.

\begin{question}
  How do we compute those coefficients $k_{i}$?
\end{question}

Let $\mc{I}_{i}$ be the ideal sheaf $X_{i}$ in $Z$, and $k_{i}$ be the maximal integer such that  $\mc{O}(-Y)\subset \mc{I}_{i}^{k_{i}}$.  By restricting to $X$, it induces canonical morphisms $\mc{O}(-Y)|_{X_{i}}\to Sym^{k_{i}}(C_{i})$ and thus induce global sections
$$\Phi_{i}\in \Gamma(X_{i},Sym^{k_{i}}(C_{i})\otimes \mc{O}(Y_{i})|_{X_{i}}).$$
Over $\mb{P}_{X_{i}}(C_{i})$, we have canonical morphisms $\pi_{i}^{*}Sym^{k}C_{i}\to \mc{O}_{\mb{P}_{X}(C_{i})}(k_{i})$ and thus $\Phi_{i}$ induce global sections
$$s_{i}\in \Gamma(\mb{P}_{X_{i}}(C_{i}),\mc{O}_{\mb{P}_{X_{i}}(C_{i})}(k_{i})\otimes \pi_{i}^{*}\mc{O}(Y)|_{X_{i}})$$
Let $E_{X_{i}}Y$ be the zero locus of $s_{i}$.  Let $pr:\tilde{Y}\to Y$ and $pr_{i}:E_{X_{i}}Y\to X_{i}$ be the respective projection morphisms. In this paper, we will prove that
\begin{theorem}
  \label{thm:main}
  If $Y-X$ is smooth and $E_{X_{i}}Y$ is a smooth effective divisor of $E_{X_{i}}Z$ for each $i$, then $\tilde{Y}$ is smooth and $E_{X_{i}}Y$ is the exceptional divisor of $X_{i}$. Moreover, we have
  $$\mc{O}(\tilde{Y})\cong \pi^{*}\mc{O}(Y)-\sum_{i=1}^{n}k_{i}E_{X_{i}}Z.$$
\end{theorem}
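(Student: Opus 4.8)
The plan is to reduce all three assertions --- smoothness of $\tilde Y$, the identification of the exceptional divisor, and the formula for $\mc{O}(\tilde Y)$ --- to an explicit computation in one affine chart of one blow-up, since everything is local on $Z$ and, the $X_{i}$ being pairwise disjoint, one may work near a single $X_{i}$ at a time; write $X_{0}$ for a fixed $X_{i}$, $d$ for its codimension, $k$ for $k_{i}$. Away from $X_{0}$ the morphism $\pi$ is an isomorphism and $\tilde Y\cong Y\setminus X$ is smooth by hypothesis, so only a neighbourhood of $E_{X_{0}}Z$ needs attention. First I would choose an affine open $U=\operatorname{Spec}A\subset Z$ meeting $X_{0}$ on which $\mc{O}(Y)$ is trivial and on which $\mc{I}_{0}=(x_{1},\dots,x_{d})$ is generated by part of a regular system of parameters, so that $C_{0}|_{X_{0}\cap U}$ is free on $\bar x_{1},\dots,\bar x_{d}$; let $f\in A$ generate $\mc{O}(-Y)|_{U}$, so $f\in(x_{1},\dots,x_{d})^{k}$ by the definition of $k$. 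Then $Bl_{X_{0}}Z|_{U}$ is covered by charts $V_{j}=\operatorname{Spec}A[\{u_{m}\}_{m\ne j}]/(x_{m}-u_{m}x_{j})$ on which $E_{X_{0}}Z=V(x_{j})$, and a direct substitution gives $\pi^{*}f=x_{j}^{k}\,\tilde f_{j}$ with $\tilde f_{j}\in\mc{O}(V_{j})$ and, crucially, $\tilde f_{j}|_{V(x_{j})}=\bar f_{j}$, where $\bar f_{j}$ is precisely the local equation of $E_{X_{0}}Y=V(s_{i})$ in the corresponding chart of $\mb{P}_{X_{0}}(C_{0})$: this is just the statement that $\Phi_{i}$ is the class of $f$ in $\mc{I}_{0}^{k}/\mc{I}_{0}^{k+1}=Sym^{k}(C_{0}|_{X_{0}})$ and that passing to the $j$-th blow-up chart dehomogenises that leading form. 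The strict transform is $\tilde Y\cap V_{j}=V(\tilde f_{j})$.

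With this in hand the smoothness of $\tilde Y$ along $E_{X_{0}}Z$ is clean. By hypothesis $E_{X_{0}}Y$ is a smooth divisor of $E_{X_{0}}Z$, and a dimension count ($\dim E_{X_{0}}Y=\dim Z-2$) shows that in $V_{j}$ it is the complete intersection $V(x_{j},\tilde f_{j})$ of the expected codimension $2$; hence by the Jacobian criterion $dx_{j}$ and $d\tilde f_{j}$ are linearly independent at every point of $E_{X_{0}}Y$, so in particular $d\tilde f_{j}\ne 0$ there and $\tilde Y=V(\tilde f_{j})$ is smooth along $E_{X_{0}}Z$; elsewhere it is smooth because $\pi$ is an isomorphism. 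The same linear independence shows $E_{X_{0}}Z\cap\tilde Y=V(x_{j},\tilde f_{j})$ is a smooth Cartier divisor of $\tilde Y$, and comparing it with $V(\bar f_{j})$ (using $\tilde f_{j}\equiv\bar f_{j}\bmod x_{j}$) identifies it scheme-theoretically with $E_{X_{0}}Y$, which is therefore the exceptional divisor of $pr\colon\tilde Y\to Y$ over $X_{0}$. Finally, no component of $\tilde Y$ lies in $E_{X_{0}}Z$ (the strict transform is the closure of $\pi^{-1}(Y\setminus X)$) and, $E_{X_{0}}Y$ being an effective Cartier divisor of $E_{X_{0}}Z$, no component of $E_{X_{0}}Z$ lies in $\tilde Y$; hence $\pi^{*}f=x_{j}^{k}\tilde f_{j}$ is an equality of Cartier divisors $\pi^{-1}(Y)=\tilde Y+\sum_{i}k_{i}E_{X_{i}}Z$, which globalises to $\mc{O}(\tilde Y)\cong\pi^{*}\mc{O}(Y)\otimes\mc{O}\bigl(-\sum_{i}k_{i}E_{X_{i}}Z\bigr)$, the asserted formula.

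The main obstacle, and the only genuinely technical point, is the middle step: verifying that the restriction $\tilde f_{j}|_{V(x_{j})}$ of the local equation of the strict transform really is the section $s_{i}$ cutting out $E_{X_{0}}Y$. This forces one to pin down, on both sides, how the intrinsic section $\Phi_{i}\in\Gamma(X_{i},Sym^{k_{i}}C_{i}\otimes\mc{O}(Y)|_{X_{i}})$ is read off from a local generator $f$ of $\mc{O}(-Y)$ --- namely as the leading form of $f$ along $X_{i}$ --- and how the Euler-type morphism $\pi_{i}^{*}Sym^{k_{i}}C_{i}\to\mc{O}_{\mb{P}(C_{i})}(k_{i})$ corresponds to the substitution $x_{m}\mapsto u_{m}x_{j}$ used in the blow-up chart; once the trivialisations of $\mc{O}(Y)|_{X_{i}}$ and $\mc{O}_{\mb{P}(C_{i})}(k_{i})$ are chosen compatibly this is a direct check. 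Everything else --- the Jacobian argument for smoothness, the divisor-multiplicity computation, and the globalisation over charts and over the $X_{i}$ --- is routine.
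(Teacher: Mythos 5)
Your proposal is correct and follows essentially the same route as the paper: the paper packages the key smoothness step as \cref{lem:2.1} (a divisor that is smooth away from a smooth divisor $E$ and meets $E$ in a smooth effective divisor is smooth) and quotes the standard theory of blow-ups for the facts that $\pi^{*}Y-\sum_{i}k_{i}E_{X_{i}}Z$ is effective and that $\tilde{Y}\cap E_{X_{i}}Z$ is the zero locus of $s_{i}$, whereas you establish exactly these points by the explicit chart computation $\pi^{*}f=x_{j}^{k}\tilde{f}_{j}$ with $\tilde{f}_{j}\equiv\bar{f}_{j}\bmod x_{j}$ and the Jacobian criterion. The mathematical content is the same; your version is just more self-contained about the identification of $\bar{f}_{j}$ with the local equation of $E_{X_{i}}Y$, which the paper asserts rather than verifies in coordinates.
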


We denote $B_{i}$ as the zero locus of $\Phi_{i}$. Our second theorem is that
\begin{theorem}
  \label{thm:1.4}
  Assume that all $k_{i}$ is $1$. If $Y-X$ is smooth and for each $i$, $B_{i}$ is smooth and $dim B_{i}=2dim X_{i}-dim Z$, then $\tilde{Y}$ is smooth and
  $$\mc{O}(\tilde{Y})\cong \pi^{*}\mc{O}(Y)-\sum_{i=1}^{n}E_{X_{i}}Z.$$
  Moreover, for any $i$ that $d_{i}=2$, we have the exceptional divisor
  $$E_{X_{i}}Y\cong Bl_{B_{i}}X_{i}.$$
  Under this isomorphism, we have
   \begin{align*}
  E_{B_{i}}X_{i}\cong pr_{i}^{*}det(C_{i})\otimes \mc{O}(\tilde{Y}+2E_{X_{i}}Z)|_{E_{X_{i}}Y},
   \end{align*}
   where $E_{B_{i}}X_{i}$ is the exceptional divisor of $B_{i}$ in $X_{i}$.
 \end{theorem}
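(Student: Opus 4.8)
The plan is to deduce the statements about $\tilde Y$ from \Cref{thm:main} and then to analyze the divisor $E_{X_i}Y = Z(s_i)$ inside $E_{X_i}Z = \mb{P}_{X_i}(C_i)$ directly in the case $d_i = 2$, where $C_i$ has rank $2$.

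First I would verify that the hypotheses of \Cref{thm:1.4} imply those of \Cref{thm:main}, i.e. that each $E_{X_i}Y$ is a smooth effective divisor of $E_{X_i}Z$. The dimension assumption $\dim B_i = 2\dim X_i - \dim Z = \dim X_i - d_i$ together with $B_i = Z(\Phi_i)$ forces $\Phi_i$ to be a regular section of the rank-$d_i$ bundle $C_i\otimes\mc{O}(Y)|_{X_i}$; in particular the vertical differential of $\Phi_i$ is surjective at every point of $B_i$. Trivializing $C_i\cong\mc{O}_{X_i}^{\oplus d_i}$ and $\mc{O}(Y)|_{X_i}\cong\mc{O}_{X_i}$ locally and writing $\Phi_i=(\phi_1,\dots,\phi_{d_i})$, the divisor $E_{X_i}Y$ is cut out by $\sum_j \phi_j t_j$ in fibre coordinates $[t_1:\cdots:t_{d_i}]$, and a short Jacobian computation shows that a singular point would force all $\phi_j$ and a nontrivial linear combination of the $d\phi_j$ to vanish there — impossible, since the $d\phi_j$ are independent along $B_i$. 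As $B_i\subsetneq X_i$ we have $s_i\not\equiv 0$, so $E_{X_i}Y$ is a smooth effective divisor; \Cref{thm:main} then yields the smoothness of $\tilde Y$, that $E_{X_i}Y$ is the exceptional divisor of $X_i$, and, since every $k_i=1$, the formula $\mc{O}(\tilde Y)\cong\pi^*\mc{O}(Y)-\sum_i E_{X_i}Z$.

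For the case $d_i=2$, set $\mc{E}:=C_i\otimes\mc{O}(Y)|_{X_i}$, a rank-$2$ bundle carrying the regular section $\Phi_i$ with zero scheme $B_i$. The Koszul complex of $\Phi_i$ resolves $\mc{O}_{B_i}$ and, in degree one, gives a surjection $\mc{E}^\vee\twoheadrightarrow\mc{I}_{B_i}$; this induces a surjection of the symmetric algebra onto the Rees algebra of $\mc{I}_{B_i}$, hence a closed immersion $Bl_{B_i}X_i=\operatorname{Proj}_{X_i}\bigl(\bigoplus_n\mc{I}_{B_i}^n\bigr)\hookrightarrow\mb{P}_{X_i}(\mc{E}^\vee)$ under which $\mc{O}(-E_{B_i}X_i)=pr_i^*\mc{I}_{B_i}\cdot\mc{O}\cong\mc{O}_{\mb{P}(\mc{E}^\vee)}(1)|$. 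Since projectivization ignores line-bundle twists and $C_i^\vee\cong C_i\otimes(\det C_i)^{-1}$ for a rank-$2$ bundle, there is a canonical identification $\mb{P}_{X_i}(\mc{E}^\vee)=\mb{P}_{X_i}(C_i^\vee)\cong\mb{P}_{X_i}(C_i)=E_{X_i}Z$; I would then check that this closed immersion factors through $Z(s_i)=E_{X_i}Y$, e.g. by noting that over $X_i\setminus B_i$ both subschemes are the graph of $x\mapsto\langle\Phi_i(x)\rangle$ and passing to closures. Because $Bl_{B_i}X_i$ is integral of dimension $\dim X_i=\dim E_{X_i}Y$ and $E_{X_i}Y$ is integral (smooth, and connected as a proper surjection with connected fibres onto the connected $X_i$), this closed immersion is an isomorphism $E_{X_i}Y\cong Bl_{B_i}X_i$. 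The line-bundle formula is then obtained by transporting $\mc{O}(E_{B_i}X_i)\cong\mc{O}_{\mb{P}(\mc{E}^\vee)}(-1)|$ through the twist identifications — $\mc{O}_{\mb{P}(\mc{F}\otimes N)}(1)=\mc{O}_{\mb{P}(\mc{F})}(1)\otimes\pi^*N$ with $(\mc{F},N)=(C_i^\vee,\mc{O}(-Y)|_{X_i})$ and then $(\mc{F},N)=(C_i,(\det C_i)^{-1})$ — to get $\mc{O}(E_{B_i}X_i)\cong pr_i^*\det(C_i)\otimes pr_i^*\mc{O}(Y)|_{X_i}\otimes\mc{O}_{\mb{P}_{X_i}(C_i)}(-1)|$, and comparing with $\mc{O}(\tilde Y+2E_{X_i}Z)|_{E_{X_i}Y}\cong pr_i^*\mc{O}(Y)|_{X_i}\otimes\mc{O}_{\mb{P}_{X_i}(C_i)}(-1)|$, which follows from $\mc{O}(E_{X_i}Z)|_{E_{X_i}Z}\cong\mc{O}_{\mb{P}(C_i)}(-1)$ and, using disjointness of the $E_{X_j}Z$, $\mc{O}(\tilde Y)|_{E_{X_i}Z}\cong\pi_i^*\mc{O}(Y)|_{X_i}\otimes\mc{O}_{\mb{P}(C_i)}(1)$.

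The routine parts are the Jacobian computation in the first step and the Koszul/Rees description of the blow-up. The step I expect to be the main obstacle — or at least the most error-prone — is the bookkeeping of projective-bundle conventions (sub- versus quotient) and line-bundle twists: one must pin down the identification $\mb{P}_{X_i}(C_i)\cong\mb{P}_{X_i}(C_i^\vee)$ precisely enough that ``$Z(s_i)$ inside $\mb{P}(C_i)$'' really matches ``the blow-up inside $\mb{P}(\mc{E}^\vee)$'', and then carry the twists consistently through every restriction to $E_{X_i}Y$ so that the final formula comes out with the correct factor $pr_i^*\det(C_i)$ and the correct coefficient $+2$ on $E_{X_i}Z$.
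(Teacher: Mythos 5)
Your proposal is correct and follows essentially the same route as the paper: your Jacobian computation for $Z(s_{i})$ is the paper's \cref{lem:2.2} (you prove just the direction needed, and note that smoothness of the zero scheme of expected dimension is what makes the differential of $\Phi_{i}$ surjective along $B_{i}$, not regularity of the section alone), and your Koszul/Rees-algebra identification $E_{X_{i}}Y\cong Bl_{B_{i}}X_{i}$ via the rank-two self-duality $C_{i}^{\vee}\cong C_{i}\otimes \det(C_{i})^{-1}$, followed by restricting $\mc{O}(\tilde{Y})$ and $\mc{O}(E_{X_{i}}Z)$ to the exceptional divisor, is exactly the paper's \cref{lem:2.3} and its application. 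If anything, you are slightly more careful than the paper in checking that the embedded blow-up actually coincides with $Z(s_{i})$ and in justifying the final isomorphism by integrality and dimension.
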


\subsection{Lefschetz decomposition of the exceptional divisor and weakly categorical crepant resolutions}
\label{sec:1.2}
In \cite{MR2354207}, Kuznetsov introduced the notion of Lefschetz decomposition:

\begin{definition}[Definition 4.1 of \cite{MR2354207}]
  Let $X$ be an algebraic variety with a line bundle $\mc{L}$ on $X$. A Lefschetz decomposition of the derived category $D^{b}(X)$ is a semi-orthogonal decomposition of $D^{b}(X)$ of the form
  \begin{align*}
    D^{b}(X)=<A_{0},A_{1}\otimes \mc{L},\cdots, \mc{A}_{m-1}\otimes \mc{L}^{m-1}> \\
    0\subset A_{m-1}\subset A_{m-2}\subset \cdots \subset A_{1}\subset A_{0}\subset D^{b}(X)
  \end{align*}
  such that $ 0\subset A_{m-1}\subset A_{m-2}\subset \cdots \subset A_{1}\subset A_{0}\subset D^{b}(X)$ is a chain of admissible sub-categories of $D^{b}(X)$. Lefschetz decomposition is called rectangular if $A_{m-1}=\cdots=A_{1}=A_{0}$.
  
  Similarly, a dual Lefschetz decomposition of $D^{b}(X)$ is a semi-orthogonal decomposition of the form
  $$D^{b}(X)=<B_{m-1}\otimes \mc{L}^{1-m},\cdots, B_{1}\otimes \mc{L}^{-1},B_{0}>$$
  where
  $$0\subset B_{m-1}\subset \cdots \subset B_{1}\subset B_{0}\subset D^{b}(X).$$
\end{definition}

From now on we assume the assumptions in \cref{thm:main} hold.
\begin{theorem}
  \label{thm:1.6}   \label{cor:1.7}
  
  If $k_{i}<d_{i}$, the functor
  $$Lpr_{i}^{*}(-)\otimes \mc{O}_{\mb{P}_{X_{i}(C_{i})}}(l):D^{b}(X_{i})\to D^{b}(E_{X_{i}}Y)$$ is fully faithful for any $l\in \mb{Z}$. Moreover, let
  $$A_{1}^{i}=A_{2}^{i}=\cdots =A_{d_{i}-k_{i}-1}^{i}:= Lpr_{i}^{*}(D^{b}(X_{i}))$$
  and $A_{0}^{i}$ be the left orthogonal of
  $$<A_{1}^{i}\otimes \mc{O}_{\mb{P}_{X_{i}(c_{i})}}(1),\cdots A_{d_{i}-k_{i}-1}^{i}\otimes \mc{O}_{\mb{P}_{X_{i}(c_{i})}}(d_{i}-k_{i}-1)>$$
  in $D^{b}(E_{X_{i}}Y)$. Then $<A_{l}^{i}\otimes \mc{O}_{\mb{P}_{X_{i}(C_{i})}}(i)>_{0\leq l\leq d_{i}-k_{i}-1}$ is a Lefschetz decomposition of $D^{b}(E_{X_{i}}Y_{i})$.

  Similarly, let
  $$B_{d_{i}-k_{i}-1}^{i}\cong \cdots \cong B_{1}^{i}:=Lpr_{i}^{*}(D^{b}(X_{i}))$$
  and $B_{0}$ be the right orthogonal of
  $$<B_{d_{i}-k_{i}-1}^{i}\otimes \mc{O}_{\mb{P}_{X_{i}(c_{i})}}(1+k_{i}-d_{i}),\cdots, B_{1}^{i}\otimes \mc{O}_{\mb{P}_{X_{i}(c_{i})}}(-1)>$$
  in $D_{b}(E_{X_{i}}Y_{i})$. Then we have the dual Lefschetz decomposition
  $$D^{b}(E_{X_{i}}Y)=<B_{d_{i}-k_{i}-1}^{i}\otimes \mc{O}_{\mb{P}_{X_{i}(c_{i})}}(1+k_{i}-d_{i}),\cdots, B_{0}>$$
\end{theorem}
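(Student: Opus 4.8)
The plan is to obtain both decompositions by restricting Orlov's projective bundle decomposition of $D^{b}(E_{X_{i}}Z)=D^{b}(\mb{P}_{X_{i}}(C_{i}))$ along the divisor $E_{X_{i}}Y$, following Kuznetsov's treatment of hyperplane sections \cite{MR2354207,MR2403307}, adapted to the relative setting over $X_{i}$ and to a divisor of relative degree $k_{i}$. Write $P:=E_{X_{i}}Z=\mb{P}_{X_{i}}(C_{i})$, let $j\colon E_{X_{i}}Y\hookrightarrow P$ denote the inclusion, and set $\mc{M}:=\mc{O}(Y)|_{X_{i}}$, so that $pr_{i}=\pi_{i}\circ j$ and $Lpr_{i}^{*}=Lj^{*}\circ\pi_{i}^{*}$. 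By \cref{thm:main}, $E_{X_{i}}Y$ is a smooth divisor of $P$ with $\mc{O}_{P}(E_{X_{i}}Y)\cong\mc{O}_{\mb{P}_{X_{i}}(C_{i})}(k_{i})\otimes\pi_{i}^{*}\mc{M}$, so there is a Koszul resolution $0\to\mc{O}_{P}(-k_{i})\otimes\pi_{i}^{*}\mc{M}^{-1}\to\mc{O}_{P}\to j_{*}\mc{O}_{E_{X_{i}}Y}\to 0$. The two structural inputs I will use are this resolution and the projective-bundle vanishing $R\pi_{i*}\mc{O}_{P}(c)=0$ for $-(d_{i}-1)\le c\le-1$, together with $R\pi_{i*}\mc{O}_{P}=\mc{O}_{X_{i}}$.

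The first step is full faithfulness of $Lpr_{i}^{*}(-)\otimes\mc{O}_{\mb{P}_{X_{i}}(C_{i})}(l)$; since twisting by $\mc{O}_{\mb{P}_{X_{i}}(C_{i})}(l)$ is an autoequivalence of $D^{b}(E_{X_{i}}Y)$, it suffices to treat $l=0$. Tensoring the Koszul resolution by $\pi_{i}^{*}G\otimes\mc{O}_{P}(-c)$ and invoking the projection formula produces, for every $c\in\mb{Z}$ and $G\in D^{b}(X_{i})$, an exact triangle in $D^{b}(P)$
$$\pi_{i}^{*}(G\otimes\mc{M}^{-1})\otimes\mc{O}_{P}(-c-k_{i})\;\longrightarrow\;\pi_{i}^{*}G\otimes\mc{O}_{P}(-c)\;\longrightarrow\;Rj_{*}\big(Lpr_{i}^{*}G\otimes\mc{O}_{\mb{P}_{X_{i}}(C_{i})}(-c)\big).$$
Applying $R\mathrm{Hom}_{P}(\pi_{i}^{*}F,-)$ and the adjunctions $Lj^{*}\dashv Rj_{*}$ and $\pi_{i}^{*}\dashv R\pi_{i*}$ turns this into a triangle computing $R\mathrm{Hom}_{E_{X_{i}}Y}(Lpr_{i}^{*}F,Lpr_{i}^{*}G\otimes\mc{O}_{\mb{P}_{X_{i}}(C_{i})}(-c))$ out of $R\mathrm{Hom}_{X_{i}}(F,G\otimes R\pi_{i*}\mc{O}_{P}(-c))$ and $R\mathrm{Hom}_{X_{i}}(F,G\otimes\mc{M}^{-1}\otimes R\pi_{i*}\mc{O}_{P}(-c-k_{i}))$. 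For $c=0$ the first term is $R\mathrm{Hom}_{X_{i}}(F,G)$ and the second vanishes precisely because $1\le k_{i}\le d_{i}-1$; this gives full faithfulness. For $1\le c\le d_{i}-k_{i}-1$ both $R\pi_{i*}\mc{O}_{P}(-c)$ and $R\pi_{i*}\mc{O}_{P}(-c-k_{i})$ vanish, so $R\mathrm{Hom}_{E_{X_{i}}Y}(Lpr_{i}^{*}F,Lpr_{i}^{*}G\otimes\mc{O}_{\mb{P}_{X_{i}}(C_{i})}(-c))=0$ for all $F,G\in D^{b}(X_{i})$. This single vanishing statement delivers simultaneously the semiorthogonality of the twisted copies of $Lpr_{i}^{*}D^{b}(X_{i})$ occurring in both decompositions and the chain conditions $A_{1}^{i}\subseteq A_{0}^{i}$, $B_{1}^{i}\subseteq B_{0}^{i}$, since $A_{0}^{i}$ and $B_{0}^{i}$ are, by construction, the orthogonal complements of exactly those subcategories.

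It then remains to check admissibility and to assemble everything formally. Since $X_{i}$ and $E_{X_{i}}Y$ are smooth and proper and $pr_{i}$ is projective, $Lpr_{i}^{*}$ is a Fourier--Mukai functor with right adjoint $Rpr_{i*}$ and hence also a left adjoint, so $A_{1}^{i}=\cdots=A_{d_{i}-k_{i}-1}^{i}=Lpr_{i}^{*}D^{b}(X_{i})$ is an admissible subcategory of $D^{b}(E_{X_{i}}Y)$, as is the semiorthogonal string $\langle A_{1}^{i}\otimes\mc{O}_{\mb{P}_{X_{i}}(C_{i})}(1),\dots,A_{d_{i}-k_{i}-1}^{i}\otimes\mc{O}_{\mb{P}_{X_{i}}(C_{i})}(d_{i}-k_{i}-1)\rangle$. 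Defining $A_{0}^{i}$ to be its orthogonal complement then automatically yields the semiorthogonal decomposition $D^{b}(E_{X_{i}}Y)=\langle A_{0}^{i},A_{1}^{i}\otimes\mc{O}(1),\dots,A_{d_{i}-k_{i}-1}^{i}\otimes\mc{O}(d_{i}-k_{i}-1)\rangle$, which, together with the chain condition above, is the asserted Lefschetz decomposition; geometrically it is the restriction to $E_{X_{i}}Y$ of the Orlov decomposition of $D^{b}(P)$, with its top $k_{i}$ twists absorbed into $A_{0}^{i}$. The dual Lefschetz decomposition follows in exactly the same way, using the twists $\mc{O}(-1),\dots,\mc{O}(-(d_{i}-k_{i}-1))$ and the same vanishing, or alternatively by applying to the Lefschetz decomposition the dualization $\mc{F}\mapsto R\mc{H}om(\mc{F},\mc{O}_{E_{X_{i}}Y})$, which preserves $Lpr_{i}^{*}D^{b}(X_{i})$.

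The argument is essentially formal once the two structural inputs are secured, so the part requiring the most care is the bookkeeping: one must carry the line bundle $\pi_{i}^{*}\mc{M}$ along the Koszul resolution (harmless, since $\pi_{i}^{*}D^{b}(X_{i})$ is stable under $\otimes\,\pi_{i}^{*}\mc{M}$) and pin down the precise range in which $R\pi_{i*}\mc{O}_{P}(c)=0$, so that full faithfulness requires exactly $k_{i}<d_{i}$ and the semiorthogonality/chain range comes out to be exactly $1\le c\le d_{i}-k_{i}-1$ — the edge cases $c=d_{i}-k_{i}-1$ and $c+k_{i}=d_{i}-1$ are where this is tight. A secondary point is properness: one uses that $X_{i}$, hence $E_{X_{i}}Y$, is proper so that $Rpr_{i*}$ preserves the bounded derived categories and the left adjoint (thus admissibility) is available; if $Z$, hence $X_{i}$, is not assumed projective this should be recorded as a hypothesis.
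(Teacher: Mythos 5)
Your proposal is correct and follows essentially the same route as the paper: both rest on the short exact sequence $0\to \pi_{i}^{*}\mc{O}(-Y)\otimes\mc{O}_{\mb{P}_{X_{i}}(C_{i})}(-k_{i})\to\mc{O}_{\mb{P}_{X_{i}}(C_{i})}\to\mc{O}_{E_{X_{i}}Y}\to 0$ exhibiting $E_{X_{i}}Y$ as a relative degree-$k_{i}$ divisor, the projection formula and adjunction, and the vanishing $R\pi_{i*}\mc{O}_{\mb{P}_{X_{i}}(C_{i})}(m)=0$ for $-d_{i}<m<0$ in exactly the same tight range $1\le c\le d_{i}-k_{i}-1$, with the assembly into the (dual) Lefschetz decomposition treated as formal, just as in the paper. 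One small correction to your closing remark: no properness hypothesis on $X_{i}$ (or $Z$) is needed, since $Rpr_{i*}$ preserves bounded coherent complexes and the left adjoint of $Lpr_{i}^{*}$ exists by relative Serre duality along the proper morphism $pr_{i}$ itself --- indeed the paper's toy example is affine.
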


In \cite{MR2403307}, Kuznetsov introduced the notion of categorical resolutions of singularities: 

\begin{definition}[Definition 3.1 of \cite{MR2403307}]
  A triangulated category $D$ is regular if it is equivalent to an admissible subcategory of the bounded derived category of a smooth variety.
\end{definition}

Given a triangulated category $D$, we denote $D^{perf}$ as the full subcategory of perfect objects in $D$.

\begin{definition}[Categorical resolution, Definition 3.2 of \cite{MR2403307}]
  A categorical resolution of a triangulated category $D$ is a regular triangulated category $\tilde{D}$ and a pair of functors
  $$\pi_{*}:\tilde{D}\to D, \pi^{*}:D^{perf}\to \tilde{D},$$
  such that
  \begin{enumerate}
  \item $\pi^{*}$ is left adjoint to $\pi_{*}$ on $D^{perf}$, that is
    $$Hom_{\tilde{D}}(\pi^{*}F,G)\cong Hom_{D}(F,\pi^{*}G)$$
    for any $F\in D^{perf}$, $G\in \tilde{D}$,
  \item the natural morphism of functors $id_{D^{perf}}\to \pi_{*}\pi^{*}$ is an isomorphism.
  \end{enumerate}
 \end{definition}

\begin{definition}[Definition 3.4 of \cite{MR2403307}]
  A categorical resolution $(\tilde{D},\pi_{*},\pi^{*})$ of $D$ is weakly crepant if the functor $\pi^{*}$ is right adjoint to $\pi_{*}$ on $D^{perf}$:
  $$Hom_{\tilde{D}}(G,\pi^{*}F)\cong Hom_{D}(\pi_{*}G,F)$$
  for any $F\in D^{perf}, G\in \tilde{D}$.
\end{definition}

\begin{remark}
  In contrast to the weakly crepant categorical resolution, Kuznetsov \cite{MR2403307} also introduced the notion of strong crepant categorical resolution by requiring the relative Serre functor as identity.
\end{remark}

In \cite{MR2403307}, Kuznetsov revealed the relation between the Lefschetz decomposition of the exceptional divisor and weakly (and strongly) crepant categorical resolution of singularities. By applying his theorem (with a very mild generalization), we prove that
\begin{theorem}
  \label{thm:1.11}
  Assuming the setting of \cref{thm:main} and $k_{i}<d_{i}$ for all $i$, let $\tau_{i}:E_{X_{i}}Y\to \tilde{Y}$ be the closed embeddings of exceptional divisors. Then we have a semi-orthogonal decomposition
  $$D^{b}(\tilde{Y})=<R\tau_{i*}(Lpr_{i}^{*}D^{b}(X_{i})\otimes \mc{O}_{\mb{P}_{X_{i}(C_{i})}}(l_{ij})),\tilde{D}>_{1\leq i\leq n, k_{i}-d_{i}<l_{ij}<0},$$
 such that $\tilde{D}$ is a weak crepant categorical resolution of $D^{b}(Y)$.
\end{theorem}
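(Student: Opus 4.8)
The plan is to reduce \cref{thm:1.11} to Kuznetsov's general machinery relating Lefschetz decompositions of an exceptional divisor to weakly crepant categorical resolutions, using the Lefschetz decompositions already produced in \cref{thm:1.6}. First I would set the stage: by \cref{thm:main}, $\tilde Y = Bl_X Y$ is smooth, hence $D^b(\tilde Y)$ is a regular triangulated category, and $pr\colon \tilde Y \to Y$ is a proper birational morphism which is an isomorphism away from $X$ (since $Y - X$ is smooth and $pr$ restricts to the blow-up along the $X_i$). The functors $pr_*\colon D^b(\tilde Y)\to D^b(Y)$ and $Lpr^*\colon D^{\mathrm{perf}}(Y)\to D^b(\tilde Y)$ are the candidate adjoint pair. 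Because the exceptional locus is the disjoint union of the $E_{X_i}Y$, each a projectivized-bundle-type divisor $\tau_i\colon E_{X_i}Y\hookrightarrow \tilde Y$ with a projection $pr_i$ to $X_i$, the situation is exactly the "resolution with one-step exceptional divisor" studied by Kuznetsov, except that here we have several disjoint divisors at once; I would handle the components one at a time (or simultaneously, invoking disjointness) and take the obvious direct sum over $i$.

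The core step is to invoke (a mild generalization of) Kuznetsov's theorem from \cite{MR2403307}: given the dual Lefschetz decomposition of $D^b(E_{X_i}Y)$ from the second half of \cref{thm:1.6}, with the trivial pieces $B^i_1 = \cdots = B^i_{d_i-k_i-1} = Lpr_i^* D^b(X_i)$ pulled back from $X_i$, one obtains a semi-orthogonal decomposition
$$D^b(\tilde Y) = \langle R\tau_{i*}\bigl(Lpr_i^* D^b(X_i)\otimes \mc{O}_{\mb{P}_{X_i}(C_i)}(l_{ij})\bigr),\ \tilde D\rangle_{1\le i\le n,\ k_i - d_i < l_{ij} < 0},$$
where $\tilde D$ is defined as the orthogonal complement of all the "pushed-forward" pieces. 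One then must check that $(\tilde D, pr_*|_{\tilde D}, pr^*)$ is a categorical resolution of $D^b(Y)$: regularity of $\tilde D$ is automatic (it is admissible in $D^b(\tilde Y)$, which is smooth), the adjunction $Hom_{\tilde D}(pr^*F, G)\cong Hom_{D^b(Y)}(F, pr_* G)$ follows from the projection formula and $Rpr_*\mc{O}_{\tilde Y}\cong \mc{O}_Y$ (here one needs $Y$ to have at worst rational singularities along $X$, or to argue directly from the explicit structure of the blow-up and the fact that the fibers of $pr_i$ are projective spaces), and $id \to pr_* pr^*$ being an isomorphism follows similarly. The key numerical input making the $l_{ij}$ range $k_i - d_i < l_{ij} < 0$ is precisely the length $d_i - k_i - 1$ of the nontrivial part of the Lefschetz decomposition together with the relative canonical bundle computation $\omega_{\tilde Y/Y} \cong \mc{O}(\sum_i (d_i - k_i - 1) E_{X_i}Z|_{\tilde Y})$, which one reads off from the formula $\mc{O}(\tilde Y)\cong \pi^*\mc{O}(Y) - \sum k_i E_{X_i}Z$ in \cref{thm:main} via adjunction; this is exactly the condition $k_i < d_i$ ensuring the decomposition is nonempty and crepancy can hold.

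The weak crepancy — that $pr^*$ is also \emph{right} adjoint to $pr_*$ on $D^{\mathrm{perf}}(Y)$ — is the heart of the matter and is where Kuznetsov's criterion enters essentially: it holds because the Lefschetz decomposition of $E_{X_i}Y$ used is, in the appropriate sense, "balanced" with respect to the relative dualizing sheaf of $\tau_i$, i.e. the number of nontrivial terms matches $d_i - k_i$, the codimension shift governing the conormal twist of $E_{X_i}Y$ in $\tilde Y$. Concretely, one checks that the relative Serre functor of $\tilde D$ over $D^b(Y)$, computed by Kuznetsov's formula in terms of the Lefschetz data and $\omega_{\tilde Y/Y}$, intertwines $pr_*$ and $pr^*$ correctly — the rectangular/"as-rectangular-as-possible" shape of the $A^i_l$ (all equal for $1\le l \le d_i - k_i - 1$, with only $A^i_0$ larger) is what forces the weak crepancy rather than merely giving a categorical resolution. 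I expect this verification of the Serre-functor compatibility, and carefully matching the twisting conventions for $\mc{O}_{\mb{P}_{X_i}(C_i)}(1)$ versus $E_{X_i}Z|_{\tilde Y}$, to be the main technical obstacle; the "very mild generalization" alluded to in the statement is presumably the bookkeeping needed to run Kuznetsov's single-divisor argument with $n$ disjoint divisors simultaneously, which is routine once the one-divisor case is in hand.
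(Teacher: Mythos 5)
Your proposal is correct and takes essentially the same route as the paper: the paper's proof of \cref{thm:1.11} is a one-line application of the several-divisor generalization of Kuznetsov's result (stated as \cref{thm:3.1}) to the dual Lefschetz decompositions of \cref{thm:1.6}, using the smoothness and divisor-class/adjunction data from \cref{thm:main}. The adjunction, Serre-functor, and crepancy verifications you sketch are precisely the internals of Kuznetsov's theorem, which the paper cites rather than reproves.
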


\subsection{Notations}
All algebraic varieties are assumed to be of finite type over $\mb{F}$. For any algebraic variety $X$, we denote $D^{b}(X)$ as the bounded derived category of coherent sheaves on $X$. For a morphism $f:X\to Y$, we denote $\mb{R}f_{*}$ as the derived push-forward functor and $Lf^{*}$ as the derived pull-back functor.

\subsection{Acknowledgements} This paper is inspired by the recent work of the author on the derived blow-ups of quasi-smooth derived schemes \cite{yuzhaoderived}  and Will Donovan on the homological comparison of resolution and smoothing. Will Donovan suggested that it might be interesting to rewrite some part of \cite{yuzhaoderived} in a more classical and elementary language, and the author would acknowledge him for his encouragement.

The first version of this paper was written when the author was visiting HKUST, Tsinghua YMSC, and CUHK, and the author would like to thank those institutions and Wei-Ping Li, Changjian Su, Micheal Mcbreen, Bin Gui, Siqi He, and Xinzhou Guo for their hospitality and many helpful discussions.

The author is supported by World Premier International Research Center Initiative (WPI initiative), MEXT, Japan, and Grant-in-Aid for Scientific Research grant  (No. 22K13889) from JSPS Kakenhi, Japan.
\section{Smoothness of the Strict Transform}

Before proving \cref{thm:main}, we first prove the following lemma:
\begin{lemma}
  \label{lem:2.1}
  Let $X$ be a smooth variety with pairwise disjoint effective smooth divisors $E_{i}$, $1\leq i\leq m$. Let $Y$ be an effective divisor of $X$. If $Y-\cup_{i=1}^{m}E_{i}$ is smooth and $Y\cap E_{i}$ is a smooth effective divisor of $E_{i}$ for each $i$, then $Y$ is also smooth.
\end{lemma}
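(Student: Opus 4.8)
The plan is to check smoothness of $Y$ point by point, splitting into two cases according to whether the point lies on one of the divisors $E_i$. Since the $E_i$ are pairwise disjoint, at most one of them passes through any given point $p\in Y$. If $p\notin\bigcup_i E_i$, then $p$ lies in the open subset $Y-\bigcup_i E_i$, which is smooth by hypothesis, so there is nothing to prove. The only real content is therefore the case $p\in E_i$ for exactly one index $i$; after passing to an affine neighbourhood we may assume there is a single smooth divisor $E=E_i$, cut out by a single regular function, and that $Y\cap E$ is a smooth divisor of $E$.

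First I would set up local coordinates. Working in the local ring $\mc{O}_{X,p}$ (or after étale localization, a power series ring), choose a regular system of parameters so that $E=\{t=0\}$ for some parameter $t$. Let $f\in\mc{O}_{X,p}$ be a local equation for $Y$. The key algebraic step is to control the order of vanishing of $f$ along $E$: write $f=t^{a}g$ with $g$ not divisible by $t$, where $a=\mathrm{ord}_E(f)\geq 0$. The restriction $Y\cap E$ is then cut out inside $E=\{t=0\}$ by $\bar g:=g|_{E}$ (when $a=0$ this is just $f|_E$, and when $a\geq 1$ the scheme $Y\cap E$ is non-reduced unless we interpret it as the divisor $\mathrm{div}(g|_E)$ — I would phrase the hypothesis "$Y\cap E$ is a smooth effective divisor of $E$" as saying precisely that $\bar g$ cuts out a smooth divisor, equivalently $\bar g$ is part of a regular system of parameters on $E$ at $p$, i.e. $d\bar g(p)\neq 0$ in $\Omega_{E,p}$).

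Now I would run the Jacobian criterion on $f=t^a g$. We have $df = a t^{a-1}g\,dt + t^a\,dg$. If $a\geq 2$, then $df\equiv 0\pmod{t}$, forcing $df(p)=0$; but then $f$ would be singular along all of $E$, and in particular $Y\cap E$ could not be a divisor of $E$ at all — this contradicts the hypothesis that $Y\cap E$ is an effective (Cartier) divisor of $E$, so this case is excluded. If $a=1$, then $df(p) = g(p)\,dt(p) + 0$; since $g$ is not divisible by $t$... actually $g(p)$ could still be $0$, so here I must use the hypothesis more carefully: $Y\cap E=\mathrm{div}(g|_E)=\{\bar g=0\}$ being a \emph{smooth} divisor of $E$ means $\bar g(p)=0\Rightarrow d\bar g(p)\neq0$; combined with $df(p)=g(p)\,dt(p)+t(p)\,dg(p)=g(p)\,dt(p)$ and the description of $T_pE$ as $\ker(dt(p))$, one sees $df(p)\neq 0$ whenever $f(p)=0$ — if $g(p)\neq0$ this is immediate, and if $g(p)=0$ then $p\in Y\cap E$ and the nonvanishing of $d\bar g(p)$ lifts to nonvanishing of $dg(p)$ modulo $dt(p)$, hence $df(p)\neq0$. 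If $a=0$, then near $p$ we have $Y=\{g=0\}$ with $g|_E$ cutting out the smooth divisor $Y\cap E$ of $E$; again $dg(p)$ is nonzero because its image in $\Omega_{E,p}$ is $d\bar g(p)\neq0$ (using $p\in Y\cap E$, so $\bar g(p)=0$). In every admissible case $Y$ is smooth at $p$.

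The main obstacle, and the point I would be most careful about, is the bookkeeping around the order of vanishing $a$ and the precise meaning of "$Y\cap E$ is a smooth effective divisor of $E$": one has to argue that $a\geq 2$ is impossible and that $a=1$ is compatible with smoothness of $Y$ only because the divisor $\mathrm{div}(g|_E)$ is smooth, then transfer the differential-nonvanishing statement on $E$ back up to $X$ via the conormal sequence $0\to (t)/(t^2)\to \Omega_{X,p}\otimes\kappa(p)\to\Omega_{E,p}\otimes\kappa(p)\to 0$. Once the local computation is organized this way, the global statement follows since smoothness is local and the $E_i$ are disjoint. I would also remark that this lemma is exactly what gets applied, in the proof of \cref{thm:main}, to $X=Bl_X Z$, the divisors $E_{X_i}Z$, and $Y=\tilde Y$.
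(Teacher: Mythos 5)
Your case $a=0$ is correct and is essentially the paper's argument (the paper expands $f$ in powers of the local equation of $E$ and observes that at points of $E$ the partials of $f$ along $E$ coincide with those of $f|_{E}$, so a singular point of $Y$ on $E$ would be a singular point of the divisor $Y\cap E$). But your treatment of $a\geq 1$ contains a genuine error, caused by your decision to reinterpret the hypothesis. If $a\geq 1$, i.e.\ $f=t^{a}g$ with $a\geq 1$, then $f|_{E}\equiv 0$, so the scheme-theoretic intersection $Y\cap E$ is all of $E$ near $p$ — it is not an effective divisor of $E$ at all. Under the reading actually used in the paper (and in the application inside the proof of \cref{thm:main}, where $E_{X_i}Z\cap\tilde Y$ is the zero locus of the section $s_i$ and is required to be a divisor), the hypothesis simply excludes $a\geq 1$, and no further argument is needed; this is the correct fix.

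Instead you replace the hypothesis by ``$\mathrm{div}(g|_{E})$ is a smooth divisor of $E$'' and then claim that in the subcase $a=1$, $g(p)=0$ one still gets $df(p)\neq 0$ because $d\bar g(p)\neq 0$. That step fails: $df=g\,dt+t\,dg$, so at a point with $t(p)=g(p)=0$ both coefficients vanish and $df(p)=0$, regardless of $dg(p)$. Indeed the statement is false under your reinterpretation: take $X=\mb{A}^{2}$, $E=\{x=0\}$, $f=xy$. Then $Y-E$ is smooth and $g|_{E}=y$ cuts out a smooth divisor of $E$, yet $Y=\{xy=0\}$ is singular at the origin (this is exactly the excluded situation where $E$ is a component of $Y$). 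So the $a=1$ branch as written is wrong; once you delete it and discard $a\geq 1$ for the reason you already gave in the $a\geq 2$ case (namely that $Y\cap E$ is then not a divisor of $E$), what remains is the paper's proof.
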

\begin{proof}
  By induction, we can reduce to the case that $m=1$. As the smoothness property is a local property, and preserved under the etale morphisms, we can assume that $X$ is an open subvariety of the affine space $\mb{A}^{n}=Spec\ \mb{F}[x_{1},x_{2},\cdots, x_{n}]$, such that $E_{1}$ is the zero locus of $x_{n}$ and $Y$ is the zero locus of
  $$f=x_{n}^{l}g_{n}+x_{n}^{l-1}g_{l-1}+\cdots+ g_{0}$$
  where $g_{i}\in k[x_{1},\cdots, x_{n-1}]$. Then for any $p\in E_{1}$ such that $f(p)=0$, we have
  $$\frac{\partial f}{\partial x_{i}}(p)=\frac{\partial g_{0}}{\partial x_{i}}(p)$$
  for any $1\leq i<n$. Hence the singular locus of $f$ in $Y$ is inside the singular locus of $g_{0}$ in $Y\cap E_{1}$.
\end{proof}
\begin{proof}[Proof of \cref{thm:main}]
  As $\mc{O}(-Y)\subset \mc{I}_{i}^{k_{i}}$, by \cite{MR2163383}, $\pi^{*}\mc{O}(Y)-\sum_{i=1}^{n}k_{i}E_{X_{i}}Z$ is also an effective divisor of $X$, which we denote as $\mc{O}(\tilde{Y})$. Moreover, we have $E_{X_{i}}Z\cong \mb{P}_{X_{i}}(C_{i})$ and $E_{X_{i}}Z\cap \tilde{Y}$ is the zero locus of $s_{i}$. If all the zero locus of $s_{i}$ are smooth effective divisors of $E_{X_{i}}Z$ respectively, by \cref{lem:2.1} $\tilde{Y}$ is smooth. Moreover, it is the schematic closure of $Y-X$ in $Bl_{X}Z$, and thus is the strict transform of $Y$. The preimage of $X_{i}$ in $\tilde{Y}$ is the zero locus of $s_{i}$, and hence is the exceptional divisor $E_{X_{i}}Y$.
\end{proof}

To prove \cref{thm:1.4}, we first prove the following lemmas:

\begin{lemma}
  \label{lem:2.2}
  Let $X$ be a smooth variety. Let $V$ be a locally free sheaf over $X$ and $L$ be a line bundle over $X$. Let $\Phi$ be a global section of $V\otimes L$ and let $S$ be the zero locus of $s$. Let $pr_{V}$ be the projection from $\mb{P}_{X}(V)$ to $X$ and $t$ be the composition of morphisms:
  $$\mc{O}\to pr_{V}^{*}(V\otimes L)\xrightarrow{taut} pr_{V}^{*}L\otimes \mc{O}_{\mb{P}_{X}(V)}(1)$$
  which is a global section of $pr_{V}^{*}L\otimes \mc{O}_{\mb{P}_{X}(V)}(1)$. Let $T$ be the zero locus of $t$. Then $S$ is a $dim(X)-rank(V)$ dimensional smooth subvariety of $X$ iff $T$ is an effective smooth divisor of $\mb{P}_{X}(V)$.
\end{lemma}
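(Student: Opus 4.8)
The statement is local on $X$, and both the formation of $\mathbb{P}_X(V)$ and the zero loci $S$ and $T$ commute with restriction to opens and with étale base change, so I would begin by trivializing: assume $X$ is affine (even an open in $\mathbb{A}^n$) with $V \cong \mathcal{O}_X^{\oplus r}$ and $L \cong \mathcal{O}_X$, so that $\Phi = (\phi_1,\dots,\phi_r)$ with $\phi_j \in \mathcal{O}_X(X)$, and $S = V(\phi_1,\dots,\phi_r)$. Then $\mathbb{P}_X(V) = X \times \mathbb{P}^{r-1}$ with homogeneous coordinates $[\xi_1:\dots:\xi_r]$, the tautological surjection $\mathcal{O}^{\oplus r} \to \mathcal{O}_{\mathbb{P}^{r-1}}(1)$ sends $e_j \mapsto \xi_j$, and the section $t$ is just $\sum_j \phi_j \xi_j$, a section of $pr_V^*L \otimes \mathcal{O}(1)$ whose zero locus $T \subset X \times \mathbb{P}^{r-1}$ is the incidence-type hypersurface $\{(x,[\xi]) : \sum_j \phi_j(x)\xi_j = 0\}$.

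The core of the argument is a pointwise Jacobian computation on $T$. Fix a point $q = (p,[\xi]) \in T$, and work in the affine chart $\xi_{j_0} = 1$ for some index $j_0$ with $\xi_{j_0}\neq 0$; local coordinates near $q$ are $(x_1,\dots,x_n)$ together with $\xi_j$ for $j \neq j_0$, and $T$ is cut out by $F := \phi_{j_0} + \sum_{j\neq j_0}\phi_j \xi_j$. I would compute
$$\frac{\partial F}{\partial \xi_j}(q) = \phi_j(p) \quad (j\neq j_0), \qquad \frac{\partial F}{\partial x_i}(q) = \frac{\partial \phi_{j_0}}{\partial x_i}(p) + \sum_{j\neq j_0}\frac{\partial \phi_j}{\partial x_i}(p)\,\xi_j.$$
There are two cases. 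If some $\phi_j(p) \neq 0$ for $j \neq j_0$, then the $\xi_j$-derivative is already nonzero, so $T$ is smooth at $q$ of the expected codimension $1$ — and note that in this case $p \notin S$, so such points impose no condition. If instead $\phi_j(p) = 0$ for all $j \neq j_0$: since $F(q) = 0$ forces $\phi_{j_0}(p) = 0$ as well, we get $p \in S$, and the $x_i$-derivative at $q$ reduces to $\sum_{j\neq j_0}\frac{\partial \phi_j}{\partial x_i}(p)\,\xi_j$. Varying over the fiber $\{p\}\times\mathbb{P}^{r-1}$, smoothness of $T$ at every such point is equivalent to: the matrix $\left(\frac{\partial \phi_j}{\partial x_i}(p)\right)_{i,j}$ (an $n\times r$ matrix, the Jacobian of the map $\Phi: X \to \mathbb{A}^r$ at $p$) never annihilates any nonzero vector $(\xi_j)_{j\neq j_0}$ extended by $\xi_{j_0}$ — i.e. it has rank $r$. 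But the Jacobian criterion says precisely that $S = \Phi^{-1}(0)$ is smooth of codimension $r$ at $p$ exactly when this Jacobian has full rank $r$ at $p$. Assembling both cases over all $q \in T$ gives the equivalence, and the dimension count $\dim S = \dim X - \operatorname{rank}(V)$ is recorded simultaneously.

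A couple of bookkeeping points complete the argument. First, one must check that when $S = \varnothing$ the statement still reads correctly: emptiness of $S$ means $\Phi$ is nowhere zero, hence for every $p$ and every $[\xi]$ the linear form $\sum_j \phi_j(p)\xi_j$ is not identically zero, which makes $T \to X$ a projective bundle of hyperplanes — in particular $T$ is smooth — while "$\dim X - r$-dimensional smooth subvariety" is vacuously satisfied by the empty set; so the biconditional holds. Second, to glue the local computation into a global statement on a general smooth $X$, I would invoke that smoothness is étale-local and that the constructions $\mathbb{P}_X(V)$, $t$, $T$, $S$ are all compatible with étale localization — this is where the hypothesis "$X$ smooth" rather than merely "$X$ a scheme" is used, ensuring that checking the Jacobian condition in étale charts detects smoothness honestly.

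The main obstacle I anticipate is purely organizational rather than deep: correctly matching up the "missing coordinate" $\xi_{j_0}$ in the affine chart with the fact that one row/column of the relevant Jacobian is handled implicitly by the chart choice, and verifying that the rank-$r$ condition one extracts is chart-independent (it is, since it is the rank of the intrinsic map $d\Phi_p : T_pX \to V_p^\vee \otimes L_p \cong \mathbb{A}^r$ composed appropriately). Once the linear-algebra identity "$d\Phi_p$ surjective $\iff$ no nonzero $\xi$ lies in $\ker$ of its transpose" is stated cleanly, both directions fall out at once.
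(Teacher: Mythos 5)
Your proposal is correct and takes essentially the same route as the paper: trivialize $V$ and $L$ locally, write $t$ as $\sum_j \phi_j\xi_j$, and identify the singular locus of $T$ with the points of $S$ where the Jacobian of $\Phi$ drops rank, the only cosmetic difference being that the paper runs the computation on the punctured affine cone $\{\sum_i y_i f_i=0\}\subset \mathbb{A}^{n+m}\setminus(\mathbb{A}^n\times\{0\})$ modulo $\mathbb{G}_m$ instead of in affine charts of the projective bundle. One tiny slip: in your second case the $x_i$-derivative is $\sum_{j}\frac{\partial \phi_j}{\partial x_i}(p)\,\xi_j$ with $\xi_{j_0}=1$, not just the sum over $j\neq j_0$, but this is harmless since your stated rank condition already uses the vector extended by $\xi_{j_0}$.
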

\begin{proof}
  Actually, it follows from linear Koszul duality \cite{MR2581249} or the singularity category of projectivization of two-term complexes \cite{jiang2018derived}. Here we give elementary proof for the convenience of readers.

  As the smoothness of $S$ and $T$ are both local properties, and stable under the etale base change of $X$, we can assume that $X$ as an open subvariety of $\mb{A}^{n}=Spec\ \mb{F}[x_{1},\cdots x_{n}]$, and $L\cong \mc{O}_{X}$ and $V\cong \mc{O}_{X}^{m}$ for some positive integer $m$. Then $\Phi$ is represented by
  $$f=(f_{1},f_{2},\cdots f_{m})$$
  where  $f_{i}\in k[x_{1},\cdots x_{m}]$. We consider $\mb{A}^{n+m}=Spec\ \mb{F}[x_{1},\cdots, x_{n},y_{1},\cdots ,y_{m}]$ and
  $$g:=\sum_{i=1}^{m}y_{i}f_{i}.$$
  Here is a $\mb{G}_{m}$ action on $\mb{A}^{n+m}$ by
  $$t\cdot (x_{1},\cdots,x_{n},y_{1},\cdots,y_{m})=(x_{1},\cdots,x_{n},ty_{1},\cdots, ty_{m})$$
  and we denote $T'$ as the zero locus of $g$ on $\mb{A}^{n+m}-\mb{A}^{n}\times \{0\}$. Then $T\cong T'/\mb{G}_{m}$.

  Given $p\in \mb{A}^{n}$, we consider the deravative matrix with the $(i,j)$ entry
  $$Jac(f)_{p}=
  \begin{pmatrix}
    \frac{\partial f_{i}}{\partial x_{j}}(p)
  \end{pmatrix}|_{1\leq i \leq m, 1\leq j\leq n}
  $$
  and the critical locus of $f$, which we denote as $crif(f)$, is defined the the locus that $Jac(f)(p)$ has rank less than $m$. Then the singular locus of $S$ is $S\cap crit(f)$. The singular locus of $T'$ is
  $$\{(x_{1},\cdots,x_{n},y_{1},\cdots, y_{m})|x\in S,Jac(f)_{x}(y)=0\}$$
  where $x:=(x_{1},\cdots,x_{n})$ and $y:=(y_{1},\cdots,y_{m})$. As $y\neq \{0\}$, the singular locus of $T'$ (and also $T$) is empty iff the singular locus of $S$ is empty.
\end{proof}

\begin{lemma}
  \label{lem:2.3}
  We assume all the assumptions of \cref{lem:2.2}. Moreover, we assume that $rank (V)=2$ and $S$ is smooth of $dim(X)-2$. Then $T\cong Bl_{S}X$ and the projection morphism of the blow-up factors through the projection morphism $pr_{V}:\mb{P}_{X}(V)\to X$. Moreover, under the isomorphism, we have
  \begin{equation}
    \label{eq:2.1}
  \mc{O}_{\mb{P}_{X}(V)}(1)|_{T}\cong pr_{V}^{*}(det(V)\otimes L)|_{T} \otimes (E_{S}X)^{-1}
  \end{equation}

\end{lemma}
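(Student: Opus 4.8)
The strategy is to exhibit $T$ and $Bl_{S}X$ as the \emph{same} closed subscheme of $\mb{P}_{X}(V)$, and then to extract \eqref{eq:2.1} by tracking $\mc{O}_{\mb{P}_{X}(V)}(1)$ through this identification. The starting observation is that $S$ is the zero scheme of the section $\Phi$ of the rank-$2$ bundle $V\otimes L$ and has the expected codimension $2$, so $\Phi$ is a regular section and the Koszul complex resolves $\mc{O}_{S}$; in particular
\begin{equation*}
0\longrightarrow det(V)^{-1}\otimes L^{-2}\xrightarrow{\ d\ } V^{\vee}\otimes L^{-1}\longrightarrow \mc{I}_{S}\longrightarrow 0 .
\end{equation*}
Moreover an ideal locally generated by a regular sequence is of linear type --- locally the Rees algebra of $(f_{1},f_{2})$ is $\mc{O}_{X}[T_{1},T_{2}]/(f_{1}T_{2}-f_{2}T_{1})$ --- so the natural surjection $Sym_{\mc{O}_{X}}(\mc{I}_{S})\to\bigoplus_{n\geq0}\mc{I}_{S}^{n}$ is an isomorphism and hence
\begin{equation*}
Bl_{S}X\cong\mb{P}_{X}(\mc{I}_{S}):=Proj_{X}Sym(\mc{I}_{S}),
\end{equation*}
an isomorphism under which the relative $\mc{O}(1)$ of $\mb{P}_{X}(\mc{I}_{S})$ is $\mc{I}_{S}\cdot\mc{O}_{Bl_{S}X}=\mc{O}_{Bl_{S}X}(-E_{S}X)$.

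Next I would realize $\mb{P}_{X}(\mc{I}_{S})$ from the presentation above: for a line bundle $\mc{A}$, a locally free sheaf $\mc{B}$ and a map $d:\mc{A}\to\mc{B}$, the scheme $Proj_{X}Sym(coker\,d)$ is the zero locus inside $\mb{P}_{X}(\mc{B})$ of the section of $\mc{O}_{\mb{P}_{X}(\mc{B})}(1)\otimes pr^{*}\mc{A}^{\vee}$ obtained by composing $pr^{*}d$ with the tautological quotient $pr^{*}\mc{B}\to\mc{O}_{\mb{P}_{X}(\mc{B})}(1)$, and the relative $\mc{O}(1)$ of $Proj_{X}Sym(coker\,d)$ is the restriction of $\mc{O}_{\mb{P}_{X}(\mc{B})}(1)$. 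Thus $Bl_{S}X\cong\mb{P}_{X}(\mc{I}_{S})$ sits in $\mb{P}_{X}(V^{\vee}\otimes L^{-1})$ as the zero locus of a section of $\mc{O}_{\mb{P}_{X}(V^{\vee}\otimes L^{-1})}(1)\otimes pr^{*}(det(V)\otimes L^{2})$. Using the standard identifications $\mb{P}_{X}(\mc{E}\otimes\mc{M})\cong\mb{P}_{X}(\mc{E})$ with $\mc{O}(1)$ twisted by $pr^{*}\mc{M}$, and $V^{\vee}\cong V\otimes det(V)^{-1}$ (valid because $rank(V)=2$), one gets $\mb{P}_{X}(V^{\vee}\otimes L^{-1})\cong\mb{P}_{X}(V)$ with $\mc{O}_{\mb{P}_{X}(V^{\vee}\otimes L^{-1})}(1)\cong\mc{O}_{\mb{P}_{X}(V)}(1)\otimes pr_{V}^{*}(det(V)^{-1}\otimes L^{-1})$, so the cutting section becomes a section of $\mc{O}_{\mb{P}_{X}(V)}(1)\otimes pr_{V}^{*}L$; a naturality argument (or a check on local trivializations, where it is $f_{1}u_{1}+f_{2}u_{2}$ after the coordinate swap implementing $\mb{P}(V^{\vee})\cong\mb{P}(V)$) identifies it with the tautological section $t$. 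Hence $T=Z(t)\cong Bl_{S}X$, the blow-up morphism is $T\hookrightarrow\mb{P}_{X}(V)\xrightarrow{pr_{V}}X$, and $E_{S}X=\rho^{-1}(S)$ for $\rho:=pr_{V}|_{T}$; combining the displayed twist with $\mc{O}_{\mb{P}_{X}(\mc{I}_{S})}(1)\cong\mc{O}_{Bl_{S}X}(-E_{S}X)$ yields $\mc{O}_{\mb{P}_{X}(V)}(1)|_{T}\cong pr_{V}^{*}(det(V)\otimes L)|_{T}\otimes\mc{O}_{T}(-E_{S}X)$, which is \eqref{eq:2.1}.

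The main obstacle I anticipate is bookkeeping rather than conceptual: the linear-type input (equivalently, that $f_{1}T_{2}-f_{2}T_{1}$ generates the full module of relations among $f_{1},f_{2}$), keeping every line-bundle twist correct along $\mb{P}_{X}(V^{\vee}\otimes L^{-1})\cong\mb{P}_{X}(V^{\vee})\cong\mb{P}_{X}(V)$, and above all verifying that the section cutting out $\mb{P}_{X}(\mc{I}_{S})$ is exactly $t$ and not a twist of it. As a safeguard, and as an independent proof, I would also record the hands-on version: in a trivialization $\mb{P}_{X}(V)\cong\mb{P}^{1}_{X}$ one has $T=Z(f_{1}u_{1}+f_{2}u_{2})$, and since $f_{1}u_{1}+f_{2}u_{2}$ vanishes on $T$ the ideal $\rho^{*}\mc{I}_{S}\cdot\mc{O}_{T}$ is invertible --- generated by $f_{2}$ where $u_{1}\neq0$ and by $f_{1}$ where $u_{2}\neq0$ --- so the universal property of the blow-up gives a canonical $X$-morphism $T\to Bl_{S}X$, which the same computation identifies with the isomorphism onto $V(f_{1}T_{2}-f_{2}T_{1})\subset\mb{P}^{1}_{X}$; for \eqref{eq:2.1}, the local statement is that $\Phi|_{T}$, having zero image in $(pr_{V}^{*}L\otimes\mc{O}(1))|_{T}$, factors through the tautological sub-line-bundle $\mc{S}|_{T}\cong pr_{V}^{*}det(V)\otimes\mc{O}(-1)|_{T}$ of $pr_{V}^{*}V|_{T}$, giving a section of $pr_{V}^{*}(det(V)\otimes L)\otimes\mc{O}(-1)|_{T}$ whose (regularly embedded) zero divisor is $\rho^{-1}(S)=E_{S}X$.
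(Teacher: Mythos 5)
Your proposal is correct and follows essentially the same route as the paper: the Koszul resolution of $\mc{I}_{S}$ together with the rank-two identification $V^{\vee}\cong V\otimes det(V)^{-1}$ presents a twist of $\mc{I}_{S}$ as a quotient of a rank-two bundle, so that $Bl_{S}X\cong Proj_{X}\bigoplus_{i\geq 0}Sym^{i}(\mc{I}_{S})$ embeds in $\mb{P}_{X}(V)$ as the zero locus of $t$, with \cref{eq:2.1} coming from the shift of $\mc{O}(1)$ under the line-bundle twist. You simply make explicit two points the paper leaves implicit (that $\mc{I}_{S}$ is of linear type, and that the cutting section is indeed $t$ up to the rank-two duality), and your local universal-property computation is a sound independent confirmation rather than a different method.
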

\begin{proof}
  We have the exact sequence
  $$0\to det(V\otimes L)^{-1}\to V^{\vee}\otimes L^{-1}\xrightarrow{\Phi^{\vee}}\mc{O}_{X}\to \mc{O}_{S}\to 0.$$
  We notice that $V^{\vee}\otimes L^{-1}=det(V)^{-1}\otimes V\otimes L^{-1}$. Thus we have the short exact sequence:
  $$0\to L^{-1}\xrightarrow{\Phi\otimes L^{-1}}V\to det(V)\otimes L\otimes \mc{I}_{S}\to 0$$
  where $\mc{I}_{S}$ is the ideal sheaf of $S$. As
  $$Bl_{S}X\cong Proj_{X}\oplus_{i=0}^{\infty}Sym^{i}(\mc{I})\cong Proj_{X}\oplus_{i=0}^{\infty}Sym^{i}(det(V)\otimes L\otimes \mc{I})$$
  we have $T\cong Bl_{S}X$ and \cref{eq:2.1} follows from the shift of line bundles.
\end{proof}
\begin{proof}[Proof of \cref{thm:1.4}]
  The smoothness of $\tilde{Y}$ follows from \cref{thm:main} and \cref{lem:2.2}. When $d_{i}=2$, we have
  $$E_{X_{i}}Z|_{E_{X_{i}}Y}\cong \mc{O}_{\mb{P}_{X_{i}}(C_{i})}(-1)|_{E_{X_{i}}Y}.$$
  On the other hand, by \cref{lem:2.3}, we have
  \begin{align*}
    E_{B_{i}}X_{i}& \cong (\pi_{i}^{*}(det(C_{i})\otimes \mc{O}(Y))\otimes\mc{O}_{\mb{P}_{X_{i}}(C_{i})}(-1))|_{E_{X_{i}}Y_{i}}\\  &\cong (\pi_{i}^{*}det(C_{i})\otimes \mc{O}(\tilde{Y}+2E_{X_{i}}Z)|_{E_{X_{i}Z}})|_{E_{X_{i}}Y} \\
    & \cong pr_{i}^{*}det(C_{i})\otimes \mc{O}(\tilde{Y}+2E_{X_{i}}Z)|_{E_{X_{i}}Y}.
  \end{align*}
\end{proof}

\section{Lefschetz Decompositions and Weak Crepant Categorical Resolutions}

In this section, we prove \cref{thm:1.6} and \cref{thm:1.11}.
\begin{proof}[Proof of \cref{thm:1.6}]
  Given $F,G\in D^{b}(X_{i})$ and $0\leq l<d_{i}-k_{i}-1$, by the projection formula we have
  $$Hom_{E_{X_{i}}Y}(Lpr_{i}^{*}F,Lpr_{i}^{*}G\otimes \mc{O}_{\mb{P}_{X_{i}}(c_{i})}(l))\cong Hom_{X_{i}}(F\otimes Rpr_{i*}\mc{O}_{E_{X_{i}}Y}(-l),G).$$
  Thus we only need to prove that
  \begin{equation}
    \label{eq:2.2}
    Rpr_{i*}\mc{O}_{E_{X_{i}}Y}\cong \mc{O}_{X_{i}},\quad Rpr_{i*}\mc{O}_{E_{X_{i}}Y}(-l)\cong 0
  \end{equation}
  if $0<l\leq d_{i}-k_{i}-1$. We notice that $E_{X_{i}}Y$ is the zero locus of $s_{i}$ in $\mb{P}_{X_{i}}(C_{i})$. Thus regarding $E_{X_{i}}Y$ as a closed subscheme of $\mb{P}_{X_{i}}(C_{i})$, we have the short exact sequence:
  $$0\to \pi_{i}^{*}\mc{O}(-Y)\otimes \mc{O}_{\mb{P}_{X_{i}}(C_{i})}(-k_{i})\to \mc{O}_{\mb{P}_{X_{i}}(C_{i})}\to \mc{O}_{E_{X_{i}}Y}\to 0.$$
  Thus (\ref{eq:2.2}) follows from Serre's theorem that
  $$R\pi_{*}\mc{O}_{\mb{P}_{X_{i}}(C_{i})}\cong \mc{O}_{X_{i}}, \quad R\pi_{*}\mc{O}_{\mb{P}_{X_{i}}(C_{i})}(m)\cong 0$$
  if $-d_{i}<m<0$.
\end{proof}

Before we prove \cref{thm:1.11}, we summarize Kuznetsov's theory in Section 4 of \cite{MR2403307} in the following theorem (notice that we formulate a mild generalization such that there are several pairwise disjoint exceptional divisors):

\begin{theorem}[Proposition 4.1, Theorem 4.4 and Proposition 4.5 of \cite{MR2403307}]
  \label{thm:3.1}
  Let $pr:\tilde{Y}\to Y$ be a resolution of singularities such that $Y$ is Gorenstein. Let $E_{i}$ ($i\in \{1,\cdots,n\}$) be the exceptional divisors of $pr$ which does not intersect pairwisely, $X_{i}:=pr(E_{i})$, $pr_{i}=pr|_{E_{i}}:E_{i}\to X_{i}$ and $\iota_{i}:E_{i}\to \tilde{Y}$ are the closed embeddings of exceptional divisors. Let $K_{\tilde{Y}}\cong pr^{*}K_{Y}+\sum_{i=1}^{n}(m_{i}-1)E_{i}$, where all $m_{i}\in \mb{Z}$, and $\mc{L}_{i}:=-E_{i}|_{E_{i}}$. If for each $i$, there exists a dual Lefschetz decomposition
  $$D^{b}(E_{i})=<B_{m_{i}-1}^{i}\otimes L_{i}^{1-m}, B_{m-1}^{i}\otimes L_{i}^{2-m},\cdots, B_{1}^{i}\otimes L_{i}^{-1},B_{0}^{i}>$$
  such that $Lpr_{i}^{*}D^{perf}(X_{i})\subset B_{m_{i}-1}^{i}$, then there exists a semi-orthogonal decomposition
  \begin{align*}
    D^{b}(\tilde{Y})= &<R\iota_{1*}(B_{m_{1}-1}^{1}\otimes L_{1}^{1-m_{1}}),\cdots, R\iota_{1*}(B_{1}^{1}\otimes L_{1}^{-1}),\cdots, \\
    & R\iota_{n*}(B_{m_{1}-1}^{1}\otimes L_{n}^{1-m_{1}}),\cdots, R\iota_{n*}(B_{n}^{1}\otimes L_{n}^{-1}), \tilde{D}>
  \end{align*}
  such that $\tilde{D}$ is a weak categorical crepant resolution of $Y$.
\end{theorem}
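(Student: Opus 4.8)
The plan is to deduce the statement from Kuznetsov's results in the single-exceptional-divisor case (Proposition 4.1, Theorem 4.4 and Proposition 4.5 of \cite{MR2403307}) and then to assemble the $n$ disjoint divisors by hand. In Kuznetsov's setting, where $E_{i}$ is the only exceptional divisor of the resolution, the dual Lefschetz decomposition of $D^{b}(E_{i})$ together with the hypothesis $Lpr_{i}^{*}D^{perf}(X_{i})\subset B_{m_{i}-1}^{i}$ produces an admissible subcategory
$$\mc{C}_{i}:=<R\iota_{i*}(B_{m_{i}-1}^{i}\otimes \mc{L}_{i}^{1-m_{i}}),\cdots, R\iota_{i*}(B_{1}^{i}\otimes \mc{L}_{i}^{-1})>\subset D^{b}(\tilde{Y}),$$
and the right orthogonal $\mc{C}_{i}^{\perp}$, equipped with $Rpr_{*}$ and a corrected pullback, is a weakly crepant categorical resolution of $D^{b}(Y)$, the Gorenstein hypothesis and the canonical bundle formula $K_{\tilde{Y}}\cong pr^{*}K_{Y}+(m_{i}-1)E_{i}$ being exactly ours read off at a single index. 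The only genuinely new task is to check that these $n$ separate constructions fit into one semi-orthogonal decomposition of $D^{b}(\tilde{Y})$.

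First I would establish the orthogonality lemma: for $i\neq j$, every object of $\mc{C}_{i}$ is completely orthogonal to every object of $\mc{C}_{j}$. Indeed, an object of $\mc{C}_{i}$ is set-theoretically supported on $E_{i}$ and an object of $\mc{C}_{j}$ on $E_{j}$, and since $E_{i}\cap E_{j}=\varnothing$ the local $R\mc{H}om$ vanishes, so $Hom^{\bullet}$ vanishes in both directions. Admissibility of each $\mc{C}_{i}$ in $D^{b}(\tilde{Y})$ is part of Kuznetsov's statement, and since his construction uses only the closed embedding $\iota_{i}$ together with its adjoints $L\iota_{i}^{*}$ and $\iota_{i}^{!}$, it is insensitive to the remaining divisors $E_{j}$ with $j\neq i$. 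Consequently the completely orthogonal pieces $\mc{C}_{1},\cdots,\mc{C}_{n}$ may be interleaved in any order, and concatenating them as $\mc{C}:=<\mc{C}_{1},\cdots,\mc{C}_{n}>$ yields an admissible subcategory whose internal ordering reproduces the list displayed in the theorem; the projection onto $\mc{C}$ is assembled from the individual projections onto the $\mc{C}_{i}$.

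It then remains to identify the right orthogonal $\tilde{D}:=\mc{C}^{\perp}$ as a weakly crepant categorical resolution of $D^{b}(Y)$. Regularity of $\tilde{D}$ is automatic, since it is admissible inside the smooth category $D^{b}(\tilde{Y})$. The structure functors are $pr_{*}:=Rpr_{*}|_{\tilde{D}}$ and $pr^{*}$ obtained by composing $Lpr^{*}$ with the projection onto $\tilde{D}$. Because the corrections that move $Lpr^{*}F$ into $\tilde{D}$ are supported near the $E_{i}$, which are pairwise disjoint, these corrections are mutually independent and each is governed by the local data at $E_{i}$; hence both the unit isomorphism $id_{D^{perf}(Y)}\cong pr_{*}pr^{*}$ and the left- and right-adjunction identities defining weak crepancy reduce to Kuznetsov's computations carried out separately near each $E_{i}$. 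The right-adjunction is exactly where the discrepancy $m_{i}-1$ matched to the Lefschetz length $m_{i}$ enters, through the relative dualizing complex $\omega_{pr}\cong \mc{O}(\sum_{i=1}^{n}(m_{i}-1)E_{i})$.

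The main obstacle I anticipate is making the phrase ``mutually independent corrections near disjoint $E_{i}$'' rigorous: one must verify that the projection of $Lpr^{*}F$ into $\tilde{D}$, and the relative Serre functor computation underlying crepancy, genuinely decouple over the different $E_{i}$, so that the single-divisor adjunction and crepancy statements glue into global ones. This is believable precisely because every relevant $Hom$-space and every mutation triangle is computed by complexes supported on the $E_{i}$, and the nontrivial part of $\omega_{pr}$ is concentrated on the disjoint divisors; but it requires careful bookkeeping of the mutation functors through the ordered list $<\mc{C}_{1},\cdots,\mc{C}_{n}>$ and of how $\omega_{pr}$ restricts near each $E_{i}$.
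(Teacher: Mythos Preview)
The paper does not actually prove this theorem: it is stated as a summary of Kuznetsov's single-divisor results, with only a parenthetical remark that the formulation has been ``mildly generalized'' to several pairwise disjoint exceptional divisors, and no argument for the generalization is supplied. Your proposal therefore does more than the paper, and the strategy you outline is the natural and correct one: objects of $\mc{C}_{i}$ and $\mc{C}_{j}$ for $i\neq j$ are supported on disjoint closed subsets of $\tilde{Y}$ and hence are completely orthogonal, each $\mc{C}_{i}$ is admissible in $D^{b}(\tilde{Y})$ by Kuznetsov's Proposition~4.1 applied verbatim, and the resulting $\tilde{D}$ is the intersection $\bigcap_{i}\mc{C}_{i}^{\perp}$, so every condition is checked one divisor at a time.

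One simplification relative to what you wrote: in Kuznetsov's argument the hypothesis $Lpr_{i}^{*}D^{perf}(X_{i})\subset B_{m_{i}-1}^{i}$ is precisely what guarantees that $Lpr^{*}F$, for $F$ perfect on $Y$, already lies in the right orthogonal of $\mc{C}_{i}$; no projection or mutation is required. Thus the functor $pr^{*}$ in the categorical resolution is simply $Lpr^{*}$ itself, and your concern about making the ``mutually independent corrections'' rigorous largely evaporates: there are no corrections, and both the unit isomorphism $id\to Rpr_{*}Lpr^{*}$ and the right-adjunction defining weak crepancy (which uses $\omega_{pr}\cong\mc{O}(\sum_{i}(m_{i}-1)E_{i})$ and Grothendieck duality) are verified against each $\mc{C}_{i}$ separately, exactly as in the single-divisor case.
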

\begin{remark}
  The order of $E_{i}$ in \cref{thm:3.1} does not matter. For any $\sigma$ being permutation of $n$ elements, we can replace $E_{i}$ by $E_{\sigma_{i}}$ and \cref{thm:3.1} still holds.
\end{remark}
\begin{proof}[Proof of \cref{thm:1.11}]
  It follows from \cref{thm:3.1} and \cref{thm:main}.
\end{proof}

\section{A Toy Example}
Now we give a toy example where all the above theorems apply. Given a positive integer $n$, let $Z_{n}:=\mb{A}^{2n}=Spec\ \mb{F}[X_{1},\cdots, x_{n},y_{1},\cdots, y_{n}]$ and $Y_{n}$ be the zero locus of the following equation in $Z_{n}$:
$$f_{n}=x_{1}y_{1}+x_{2}y_{2}+\cdots+x_{n}y_{n}.$$
The singular locus of $Y_{n}$ is the origin $O_{n}:=(0,\cdots, 0)\in \mb{A}^{2n}$. In this section, we take two smooth subvarieties which contain $O_{n}$:
\begin{enumerate}
\item First we take
  $$X_{n}:=\mb{A}^{n}\times \{0\}$$
  which contains elements that $y_{1}=y_{2}=\cdots =y_{n}=0$. In this situation, we apply \cref{thm:1.4} and the zero locus $B=\{O_{n}\}$ is smooth. Thus $Bl_{X_{n}}Y_{n}$ is smooth.
\item Then we take
  $$X_{n}'=\{O_{n}\}.$$
  We apply \cref{thm:main} and in this situation, we have $k=2$. The zero locus of $s$ is
  $$\{[x_{1},\cdots,x_{n},y_{1},\cdots,y_{n}]\in \mb{P}^{2n-1}|f_{n}(x_{1},\cdots,x_{n},y_{1},\cdots ,y_{n})=0\}$$
  which is smooth. Thus $Bl_{X_{n}'}Y_{n}$ is also smooth, too.
\end{enumerate}
By \cref{thm:1.11}, the above two resolutions induce two weakly categorical crepant resolutions of $D^{b}(Y_{n})$, which we denote as $D_{n}$ and $D_{n}'$ respectively. Moreover, we have $D_{1}\cong D_{1}'$ be the theory of Atiyah flops.
\bibliography{Resolution.bib}
\bibliographystyle{plain}
\vspace{5mm}
Kavli Institute for the Physics and 
Mathematics of the Universe (WPI), University of Tokyo,
5-1-5 Kashiwanoha, Kashiwa, 277-8583, Japan.

\textit{E-mail address}: yu.zhao@ipmu.jp
\end{document}